\def\subsubsection{\@startsection{subsubsection}{3}%
  \z@{.5\linespacing\@plus.7\linespacing}{.1\linespacing}%
  {\normalfont\itshape}}
\newtheorem{thm}{Theorem}[section]
\newtheorem{lemma}[thm]{Lemma}
\newtheorem{conjecture}[thm]{Conjecture}
\newtheorem{proposition}[thm]{Proposition}
\newtheorem{prop}[thm]{Proposition}
\newtheorem{problem}[thm]{Problem}
\newtheorem{cor}[thm]{Corollary}
\newtheorem{corollary}[thm]{Corollary}
\newcommand{\ignore}[1]{}
\begin{document}

\author{Rebekah Herrman}
\address[Rebekah Herrman]{Department of Industrial and Systems Engineering, The University of Tennessee, Knoxville, TN}
\email[Rebekah Herrman]{rherrma2@tennessee.edu}

\author{Stephen G. Z. Smith}
\address[Stephen G. Z. Smith]{}
\email[Stephen G. Z. Smith]{stephengz.smith@gmail.com}

\title[On the length of L-Grundy sequences] 
{On the length of L-Grundy sequences}

\linespread{1.3}
\pagestyle{plain}

\begin{abstract}
An L- sequence of a graph $G $ is a sequence of distinct vertices $S = \{v_1, ... , v_k\}$ such 
that $N[v_i] \setminus \cup_{j=1}^{i-1} N(v_j) \neq \emptyset$. The length of 
the longest L-sequence is called the L-Grundy domination number, denoted $\gamma_{gr}^L(G)$. In this 
paper, we prove $\gamma_{gr}^L(G) \leq n(G) - \delta(G) + 1$, which was conjectured by Bre{\v{s}}ar, 
Gologranc, Henning, and Kos. We also prove some early results about characteristics of $n$-vertex graphs such $\gamma_{gr}^L(G) = n$, as well as bounds on the change in L-Grundy number under graph operations.
\end{abstract}
\maketitle 

\section{Introduction}

In the zero-forcing process on a graph $G$, we select a subset of vertices and color them black. These vertices are said to be infected. At discrete time steps, if an infected vertex has one uninfected neighbor, that neighbor becomes infected, e.g. $u$ infects $v$ if $N[u]\setminus Z = \{v\}$, where $Z$ is the set of infected vertices.  The minimum number of vertices needed to infect the entire graph is called the zero-forcing number of $G$, denoted $Z(G)$. Recently, zero-forcing has been used to study multiplicities of eigenvalues of graphs \cite{lin2021zero} and the minimum rank and maximum nullity of graphs \cite{work2008zero, barioli2010zero, huang2010minimum, edholm2012vertex}. Additionally, it has been shown to be closely related to power domination \cite{benson2018zero, bozeman2019restricted}. 

A vertex $v$ of a graph $G$ totally dominates another vertex $u$ if they are neighbors, i.e. if $uv \in E$. $N(v)$ is the open neighborhood of $v$, which is the set of neighbors of $v$ and $N[v]$ is the closed neighborhood of $v$, $N[v] = N(v) \cup v$. Let $S = (v_1, \ldots, v_k)$ be a sequence of vertices such that $N[v_i] \setminus \cup_{j=1}^{i-1}N[v_j] \neq \emptyset$ for each $i \in [k]$, and denote the unordered elements of $S$ as $\widehat{S}$. For a graph $G$, the length of the longest sequence $S$ of vertices of $G$ is known as the Grundy domination number, denoted by $\gamma_{gr}(G)$. In other words, $S$ is the longest sequence such that each vertex in the sequence dominates a new vertex of $G$. For any vertex $v \in S$, $v$ footprints any $u \in N[v_i] \setminus \cup_{j=1}^{i-1} N[v_j]$. The index, or location, of a vertex $v$ in the sequence $S$ is written as $i(v)$.


Grundy domination was introduced in \cite{brevsar2014dominating} and it was later shown that the zero-forcing number is related to the 
Grundy domination number \cite{brevsar2017grundy}. In the following years, the related concepts of total Grundy domination (t-Grundy), Z-Grundy domination, and L-Grundy domination have been introduced and studied extensively \cite{brevsar2016total, brevsar2017grundy, brevsar2020grundy, brevsar2021graphs, lin2019zero, nasini2020grundy, bell2021grundy, brevsar2014dominating, brevsar2016graphchanges}. 

In this paper, we will provide results on $L$-Grundy domination that mirror previous results in t-Grundy domination. For a graph $G$, a sequence of vertices $S$ is an $L$-Grundy sequence if $N[v_i] \setminus \cup_{j=1}^{i-1} N(v_j)\neq \emptyset$.  The L-Grundy dominating number of a graph $G$, denoted by $\gamma_{gr}^L(G)$, is the longest length $L$-Grundy sequence. A sequence of vertices $S \subset V(G)$ is a t-Grundy sequence if $N(v_i) \setminus \cup_{j=1}^{i-1} N(v_j)\neq \emptyset$. The length of the longest t-Grundy sequence for a graph $G$ is the t-Grundy domination number, denoted $\gamma_{gr}^t(G)$. A vertex in an L-sequence (t-sequence) is said to L-footprint (t-footprint) another vertex if it mirrors the above footprint definition with the appropriate open and closed neighborhoods. When there is no chance of confusion, we will drop the modifier and only use footprint. Obtaining bounds for the t-Grundy domination number of a graph $G$ from its minimum and maximum degree was studied in \cite{brevsar2016total}. In particular, they showed that if the minimum degree of a graph $G$ is $\delta$, then $\gamma_{gr}^t(G) \leq n - \delta + 1$. For the L-Grundy number of $G$, it was conjectured that the same bound holds in \cite{brevsar2020grundy}. Specifically,

\begin{conjecture}[\cite{brevsar2020grundy}]\label{conjecture}
If $G$ on $n$ vertices with minimum degree $\delta$, then $\gamma_{gr}^L(G) \leq n - \delta + 1$.
\end{conjecture}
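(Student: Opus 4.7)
The plan is to extract the bound by examining a single witness at the very end of a longest L-Grundy sequence. Let $S=(v_1,\ldots,v_k)$ be a sequence achieving $k=\gamma_{gr}^L(G)$. Since $S$ is valid, the definition furnishes some witness $u\in N[v_k]\setminus\bigcup_{j<k}N(v_j)$, and I would fix one such $u$. The key observation is purely structural: the condition $u\notin N(v_j)$ for every $j<k$ is equivalent, by symmetry of adjacency, to $N(u)\cap\{v_1,\ldots,v_{k-1}\}=\emptyset$, so every neighbor of $u$ lies in $V(G)\setminus\{v_1,\ldots,v_{k-1}\}$, a set of cardinality $n-k+1$.

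I would then split into two short subcases based on whether $u$ equals $v_k$ or is a proper neighbor of it. If $u=v_k$, then since $v_k\notin N(v_k)$ one in fact has $N(v_k)\subseteq V(G)\setminus\widehat{S}$, a set of size $n-k$, so the minimum-degree hypothesis forces $\delta\le n-k$, i.e., $k\le n-\delta$. If $u\in N(v_k)$ with $u\ne v_k$, the bound from the previous paragraph gives $\deg(u)\le n-k+1$, and combining with $\delta\le\deg(u)$ yields $k\le n-\delta+1$. Taking the weaker of the two bounds is exactly the conjecture.

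I do not foresee a genuine obstacle here; the argument is essentially one line once the right witness is isolated. The only subtle point worth flagging is that in the second subcase $u$ might coincide with some $v_\ell$ for $\ell<k$, namely a neighbor of $v_k$ in $S$ that has no other neighbor among $v_1,\ldots,v_{k-1}$, but this is harmless because $u\notin N(u)$ and so the count of $|N(u)|$ is unaffected. The argument parallels the t-Grundy bound of Bre{\v{s}}ar, Henning, and Kos: the stricter open-neighborhood definition in the t-setting forces the second subcase to always occur, whereas the additional closed-neighborhood option available in the L-setting only helps, yielding the strictly stronger bound $k\le n-\delta$ whenever it applies.
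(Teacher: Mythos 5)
Your proof is correct and follows essentially the same approach as the paper: both arguments isolate a witness $u \in N[v_k]\setminus\bigcup_{j<k}N(v_j)$ for the final vertex of a longest sequence and bound $\deg(u)$ by observing that $u$ has no neighbors among $v_1,\ldots,v_{k-1}$. Your direct phrasing is in fact slightly cleaner, since it handles all values of $\delta$ uniformly, whereas the paper argues by contradiction (assuming $k \geq n-\delta+2$, so $|V(G)\setminus\widehat{S}|\leq\delta-2$) and disposes of $\delta\in\{1,2\}$ separately by citing an external result on graphs with $\gamma_{gr}^L(G)=n$.
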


In Section \ref{sec:conj}, we prove the conjecture true.

In \cite{brevsar2016total} it was shown that graphs $G$ satisfying $\gamma_{gr}^t(G) = 2$ are complete multipartite and that no graph has t-Grundy number $3$. It was also shown that all graphs $G$ satisfying $\gamma_{gr}^t(G) = |V(G)|$ have very specific structure. In Section \ref{sec:characterizations}, we prove an analogous characterization of graphs with L-Grundy number equal to two and present early results towards the classification of graphs $G$ satisfying $\gamma_{gr}^L(G) = |V(G)|$.

The effects of edge or vertex removal on the Grundy and t-Grundy number was studied in \cite{brevsar2016graphchanges,brevsar2018totaldomgraphoperations}. We provide analogous results for edge and vertex removal for the L-Grundy number in Section \ref{sec:graphoperations} and conclude the paper with open problems in Section \ref{sec:conclusion}.



\section{Proof of Conjecture \ref{conjecture}}\label{sec:conj}

For the t-Grundy dominating number of a graph $G$ with minimum degree $\delta$, Bre{\v{s}}ar, Henning, and Rall  showed that $\gamma_{gr}^t(G) \leq n - \delta +1$ \cite{brevsar2016total}. Clearly, since any dominating number of a graph $G$ can be at most the number of vertices, $\gamma_{gr}^t(G) \leq n$, it follows that if $\delta = 1$,  $\gamma_{gr}^t(G) \leq n $. However, despite not proving that the same inequality holds in the L-Grundy regime, Bre\v{s}ar, Gologranc, Henning, and Kos  \cite{brevsar2020grundy} proved that if $G$ is an $n$-vertex graph satisfying $\gamma_{gr}^L(G) = n$, then there exists a vertex $u$ such that $deg(u) \leq 1$. We now prove that Conjecture \ref{conjecture} is true.

\begin{thm}\label{thm:conj_delta}
If $G$ is a graph on $n$ vertices with minimum degree $\delta$, then $\gamma_\mathrm{gr}^L(G) \leq n - \delta + 1$.
\end{thm}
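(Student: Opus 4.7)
The plan is to take a maximum L-Grundy sequence $S = (v_1, \ldots, v_k)$ and exhibit at least $\delta - 1$ vertices of $V(G)$ lying outside $S$; this will immediately yield $k = |S| \leq n - \delta + 1$. The analysis concentrates entirely on the final vertex $v_k$ and splits into two cases according to whether $v_k$ has a neighbor among $v_1, \ldots, v_{k-1}$.

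Suppose first that $v_k$ is non-adjacent to every $v_j$ with $j < k$. Then $N(v_k)$ is disjoint from $\{v_1, \ldots, v_{k-1}\}$ by hypothesis and disjoint from $\{v_k\}$ since $G$ is simple, so the entire open neighborhood $N(v_k)$ lies in $V(G) \setminus S$. This already gives $|V(G) \setminus S| \geq \deg(v_k) \geq \delta$, which is even stronger than required.

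Suppose instead that $v_k \in \bigcup_{j<k} N(v_j)$, so $v_k$ itself fails to lie in the footprint set $F_k := N[v_k] \setminus \bigcup_{j<k} N(v_j)$. The L-sequence condition $F_k \neq \emptyset$ then forces the existence of some $u_k \in F_k \subseteq N(v_k)$. By construction, $u_k$ is a neighbor of $v_k$ that is not adjacent to any $v_j$ with $j < k$; consequently $N(u_k) \cap S \subseteq \{v_k\}$, producing at least $\deg(u_k) - 1 \geq \delta - 1$ distinct vertices of $V(G) \setminus S$, and the bound follows.

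The argument is a direct closed-neighborhood analogue of the t-Grundy proof of $\gamma_{gr}^t(G) \leq n - \delta + 1$ by Bre\v{s}ar, Henning, and Rall. The only subtle point I anticipate is handling whether $v_k$ can serve as its own footprint: if it can, then we cannot exploit a distinct witness $u_k$, but the non-adjacency hypothesis of the first case forces $N(v_k)$ itself into $V(G) \setminus S$, so the counting still goes through. The case split therefore resolves this cleanly and I do not foresee a significant obstacle.
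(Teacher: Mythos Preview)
Your argument is correct and follows essentially the same approach as the paper: both focus on the last vertex $v_k$, produce a vertex (either $v_k$ or its footprint witness $u_k$) whose only neighbor in $\widehat{S}$ is $v_k$, and conclude that at least $\delta-1$ vertices lie outside $\widehat{S}$. The paper phrases this as a contradiction argument (assuming $|V(G)\setminus\widehat{S}|\le\delta-2$ and deriving $\deg(u_k)\le\delta-1$) and handles $\delta\le 2$ separately via an external result, whereas your direct two-case formulation works uniformly for all $\delta$ and is slightly cleaner.
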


\begin{proof}
Let $G$ be an $n$-vertex graph with minimum degree $\delta$. Since every graph has an L-Grundy set at most the size of the vertex set, if $\delta = 1$ then $\gamma_{gr}^L(G) \leq n$. Furthermore, any graph $G$ satisfying $\gamma_{gr}^L(G) = n$ must have a vertex of degree $1$. Therefore, if $\delta = 2$, then $\gamma_{gr}^L(G) \leq n-1$. For the remainder of the proof, assume that $\delta \geq 3$.

Suppose by contradiction that 
$\gamma_\mathrm{gr}^L(G) \geq n - \delta + 2$. For convenience, define $t = n - \delta + 2$ and let 
$S = (v_1, \ldots, v_k)$ with $k \geq t$ be the maximal $L$-sequence of $G$. Define $\widehat{T} = V(G) \setminus \widehat{S}$. It follows that $|\widehat{T}| \leq \delta - 2$.

Since $v_k$ has degree at least $\delta$, and 
there are at most $\delta - 2$ vertices in $\widehat{T}$, $v_k$ must be adjacent to at least two 
vertices in $\widehat{S}$. Thus, $v_k \not\in N[v_k] \setminus \cup_{i=1}^{k-1} N(v_i)$, 
and some $v \in N(v_k)$ must be in $N[v_k] \setminus \cup_{i=1}^{k-1} N(v_i)$. This 
neighbor must be either in $S$ or $\widehat{T}$.

Next, we show $v$ has degree at most $\delta - 1$. 
 Note that $v$ cannot have a neighbor in $S$, else $v \notin N[v_k] \setminus \cup_{i=1}^{k-1} N(v_i)$, thus its only neighbors can be in $\widehat{T}$ or are $v_k$. If $v$ is adjacent to only vertices of $\widehat{T}$ and $v_k$, then it has degree at most $\delta -1 $, a contradiction.


Therefore, $\gamma_\mathrm{gr}^L(G) \leq n - \delta + 1$.

\end{proof}

\section{L-Grundy Domination Numbers of Graphs With a View Towards Characterization }\label{sec:characterizations}

\subsection{Graphs with small L-Grundy numbers and results for cliques}
\hfill\\
Related to the problem of characterizing graphs with L-Grundy number equal to the number of vertices is characterizing graphs with small L-Grundy numbers. Interestingly, it was shown in \cite{brevsar2018totaldomgraphoperations} that there is no graph $G$ such that $\gamma_{gr}^t(G) = 3$. In the L-Grundy regime however, that is untrue since a path on three vertices, $P_3$, proves otherwise. Moreover, $P_3$ is not the only graph with L-Grundy number equal to $3$ (delete an edge from a $K_4$). Additionally, it was shown in \cite{brevsar2016total} that a graph has t-Grundy number $2$ if and only if it is a complete multipartite graph. We prove that if $G$ is a connected graph, $\gamma_{gr}^L(G) = 2$ if and only if $G = K_n$.

\begin{lemma}\label{lem:neighbor_with_L_2} If G is a connected graph on $n$ vertices and $\gamma_\mathrm{gr}^L(G) = 2$, then the vertices in the $L$-set are adjacent.

\end{lemma}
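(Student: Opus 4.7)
The plan is a short argument by contradiction. Suppose a longest L-Grundy sequence $(v_1, v_2)$ satisfies $v_1 v_2 \notin E(G)$; the strategy is to append a third vertex and thereby contradict $\gamma_{gr}^L(G) = 2$. Because $\gamma_{gr}^L(G) = 2$ forces $n \geq 2$, connectedness of $G$ guarantees that $v_1$ has at least one neighbor $w$, and this $w$ must differ from $v_2$ (since the standing assumption gives $v_2 \notin N(v_1)$) and from $v_1$ (no self-loops). Thus $(v_1, v_2, w)$ is a candidate L-sequence on three distinct vertices.

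To finish, I would verify the three L-conditions in turn. The condition at $v_1$ is immediate. The condition at $v_2$ reduces to $N[v_2] \setminus N(v_1) \neq \emptyset$, which holds because $v_1 \not\sim v_2$ implies $v_2 \in N[v_2] \setminus N(v_1)$. The condition at $w$ reduces to $N[w] \setminus (N(v_1) \cup N(v_2)) \neq \emptyset$, and here the key observation is that $v_1 \in N(w)$ by choice of $w$, while $v_1 \notin N(v_1)$ trivially and $v_1 \notin N(v_2)$ by the standing non-adjacency assumption; hence $v_1$ itself serves as the L-footprint of $w$. This produces an L-sequence of length $3$, the desired contradiction.

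The moral, and what makes the argument so short, is that the hypothesis $v_1 \not\sim v_2$ precisely guarantees that $v_1$ is never covered by $N(v_1) \cup N(v_2)$, so $v_1$ remains available as a free footprint witness for any neighbor of $v_1$. I do not anticipate any real obstacle here; the only pitfall to watch is the distinction between the open neighborhood $N(\cdot)$ appearing in the union and the closed neighborhood $N[\cdot]$ appearing in the L-condition at each $v_i$.
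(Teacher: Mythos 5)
Your proposal is correct and follows essentially the same route as the paper: assume non-adjacency, use connectedness to find a third vertex adjacent to one of the two, and append it to get a length-three L-sequence, contradicting $\gamma_{gr}^L(G)=2$. In fact you verify the L-condition at the appended vertex (via $v_1 \in N[w]\setminus(N(v_1)\cup N(v_2))$) more explicitly than the paper does, which simply asserts that $(x,y,v)$ is an L-sequence.
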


\begin{proof}
Let $G$ be an $n$-vertex graph, $\gamma_\mathrm{gr}^L(G) = 2$, and $S = (x,y)$ be an $L$-sequence of length two. Suppose by contradiction that $x$ and $y$ are not adjacent.  If G has only two vertices, this contradicts the fact that $G$ is connected. Thus, suppose $G$ has at least three vertices. Since $G$ is connected, this implies there is at least one $v$ that is a neighbor of either $x$, $y$, or both. Then, $(x,y,v)$ is a L-sequence of length three, a contradiction.

Thus, the two vertices of the L-sequence are adjacent.
\end{proof}

With this lemma, we can now characterize graphs that have L-Grundy dominating number equal to two.

\begin{thm}\label{thm:completegraph}
If $G$ is a connected graph on $n \geq 3$ vertices, then $\gamma_\mathrm{gr}^L(G) = 2$ if and only if $G=K_n$.
\end{thm}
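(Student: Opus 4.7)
The plan is to prove both directions separately, and the forward direction will reduce almost immediately to Lemma~\ref{lem:neighbor_with_L_2}.

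For the forward direction, I would argue by contrapositive: assume $G \neq K_n$ and exhibit an $L$-sequence of length two whose two vertices are non-adjacent, which by Lemma~\ref{lem:neighbor_with_L_2} forces $\gamma_{gr}^L(G) \neq 2$. Concretely, if $G$ is not complete then there exist distinct non-adjacent vertices $u,v \in V(G)$. I claim $(u,v)$ is a valid $L$-sequence: trivially $N[u] \neq \emptyset$, and because $u$ and $v$ are non-adjacent, $v \notin N(u)$, so $v \in N[v]\setminus N(u)$. Hence $(u,v)$ is an $L$-sequence of length two with $u,v$ non-adjacent, contradicting Lemma~\ref{lem:neighbor_with_L_2} under the assumption $\gamma_{gr}^L(G) = 2$.

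For the backward direction, I would verify the two inequalities $\gamma_{gr}^L(K_n) \geq 2$ and $\gamma_{gr}^L(K_n) \leq 2$. The lower bound is immediate: take any two distinct vertices $v_1, v_2$; then $v_1 \in N[v_2] \setminus N(v_1)$ (because $v_1 \notin N(v_1)$), so $(v_1, v_2)$ is an $L$-sequence. For the upper bound, I would show that no $L$-sequence of length three exists by using the fact that in $K_n$ any two distinct vertices' open neighborhoods already cover the entire vertex set: if $v_1, v_2$ are distinct, then $N(v_1) \cup N(v_2) = (V\setminus\{v_1\}) \cup (V\setminus\{v_2\}) = V$, so for any candidate third vertex $v_3$, $N[v_3] \setminus (N(v_1)\cup N(v_2)) = \emptyset$, violating the $L$-sequence condition at step three.

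There is no real obstacle here; the two directions are short, and the only subtlety to keep in mind is the use of open versus closed neighborhoods in the $L$-sequence definition (and in particular the fact that $v \in N[v]$ but $v \notin N(v)$, which is precisely what drives both the forward-direction construction and the lower bound in the backward direction). The hypothesis $n \geq 3$ is needed only to ensure the three-vertex extension argument is even meaningful; for $n = 2$ the theorem would trivially hold with $K_2$ as well, but for $n = 1$ one has $\gamma_{gr}^L(K_1) = 1$.
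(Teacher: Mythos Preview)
Your proof is correct, and in fact your forward direction is cleaner than the paper's. The paper argues by fixing one particular maximum $L$-sequence $(x,y)$ and then building up the clique structure in two stages: first showing every other vertex must be adjacent to both $x$ and $y$ (else one of $(v,y,x)$ or $(v,x,y)$ has length three), and then showing any two vertices $v_1,v_2 \notin \{x,y\}$ must be adjacent (else $(x,v_1,y)$ has length three). You instead take the contrapositive and exhibit a single non-adjacent pair $(u,v)$ as a length-two $L$-sequence, which immediately contradicts Lemma~\ref{lem:neighbor_with_L_2}; this is valid because under the hypothesis $\gamma_{gr}^L(G)=2$ every length-two $L$-sequence is maximum, so the lemma applies to it. Your route extracts more from the lemma and avoids the case analysis.

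For the backward direction, you give an explicit verification that $\gamma_{gr}^L(K_n)=2$ via $N(v_1)\cup N(v_2)=V$, which is exactly what is needed; the paper's write-up of this direction is somewhat garbled (it announces the reverse direction but then argues by contradiction from $G\neq K_n$, effectively reproving the forward implication), so your treatment is the clearer one here as well.
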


\begin{proof}
Since $\gamma_\mathrm{gr}^L(G) = 2$, the two vertices in the L-sequence, $x$ and $y$, are adjacent to each other. Any vertex $v \in V(G)$ is adjacent to both $x$ and $y$, else $(v,y,x)$ or $(v,x,y)$ is a longer L-sequence. Thus, the $K_3$ case is complete. 

Now consider $n \geq 4$, and let $x,y$ be the vertices in the length two L-sequence. For any pair of vertices $v_1, v_2 \notin \{x,y\}$, $v_1$ must be adjacent to $v_2$, else $\{x, v_1, y\}$ is a L-sequence. Thus, all vertices must be pairwise adjacent, so $G = K_n$.

For the reverse direction, suppose by contradiction that $G \neq K_n$.

Let $G$ be an $n$-vertex graph with L-sequence $S = (v_1, v_2)$. By Lemma \ref{lem:neighbor_with_L_2}, $v_1$ and $v_2$ are adjacent. If $N(v_1) = N(v_2)$, then $N(v_1)$ is not a clique by assumption. Let $x, y$ be the two non-adjacent vertices in $N(v_1)$. Then $S' = (x,y,v_1)$ is a strictly longer $L$-sequence because $x$ does not footprint $y$, $y$ is not in $N(x)$, so $y$ does not footprint $x$, and $v_1$ footprints both $x$ and $y$. This contradicts our assumption that $S$ was maximal.

If $N(v_1) \neq N(v_2)$, without loss of generality, let $u$ be a vertex in $N(v_1)$ but not in $N(v_2)$. Then $S' = (u,v_2,v_1)$ is a strictly longer $L$-sequence because $u$ footprints $v_1$ and not $v_2$, $v_2$ footprints itself since $u$ is not adjacent to $v_2$, and finally, $v_1$ footprints $v_2$. This contradicts out assumption that $S$ was maximal.

Therefore, if $\gamma_\mathrm{gr}^L(G) = 2$, $G = K_n$.

\end{proof}

It is worth noting that the empty graph on two vertices also has L-Grundy domination number equal to two. This result on complete graphs motivates the study of sets of vertices who all have the same closed neighborhood.



\begin{proposition}
Let $G$ be a graph. If $G$ contains three pairwise adjacent vertices $v_1, v_2, v_3$ such that $N(v_1) = N(v_2) = N(v_3)$, then $\gamma_{gr}^L(G) \neq n$.
\end{proposition}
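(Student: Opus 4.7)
The plan is a short proof by contradiction. Suppose $\gamma_{gr}^L(G) = n$, and let $S = (u_1, u_2, \ldots, u_n)$ be an L-sequence enumerating every vertex of $G$. The hypothesis that $v_1, v_2, v_3$ are pairwise adjacent with $N(v_1) = N(v_2) = N(v_3)$ is only consistent under the reading $N[v_1] = N[v_2] = N[v_3]$ (literal equality of open neighborhoods is incompatible with pairwise adjacency), i.e.\ that the three vertices form a true-twin triple. Writing $W = N[v_1] \setminus \{v_1, v_2, v_3\}$ for their common external neighborhood, we obtain
\[
N(v_1) = \{v_2, v_3\} \cup W, \qquad N(v_2) = \{v_1, v_3\} \cup W, \qquad N(v_3) = \{v_1, v_2\} \cup W,
\]
and the key identity $N(v_i) \cup N(v_j) = N[v_k]$ for any permutation $(i,j,k)$ of $(1,2,3)$.

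Since $S$ lists every vertex, all three twins appear in $S$; let $v_\ast$ be whichever occurs last among them, so that the other two twins have indices strictly less than $i(v_\ast)$. Then at the moment $v_\ast$ is placed, the two preceding twins have already contributed their full open neighborhoods to $\bigcup_{r<i(v_\ast)} N(u_r)$, and by the identity above this union already contains $N[v_\ast]$. Hence
\[
N[v_\ast] \setminus \bigcup_{r<i(v_\ast)} N(u_r) = \emptyset,
\]
which directly violates the L-sequence condition at step $i(v_\ast)$. This contradiction forces $\gamma_{gr}^L(G) < n$, proving the proposition.

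I do not expect a real technical obstacle: the argument reduces to the one identity $N(v_i) \cup N(v_j) = N[v_k]$ among the three twins' neighborhoods, together with the pigeonhole observation that one of $v_1, v_2, v_3$ must be the last of the three to appear in any enumeration of $V(G)$. The only point requiring mild care is interpreting the hypothesis, which must be read as equality of \emph{closed} neighborhoods, matching the surrounding discussion of vertices sharing a common closed neighborhood.
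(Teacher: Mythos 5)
Your proof is correct and follows essentially the same route as the paper's: both arguments observe that the three vertices are true twins (so $N(v_j) \cup N(v_m) \supseteq N[v_i]$ for any ordering of the triple) and conclude that whichever twin appears last in the sequence has nothing left to footprint, so all three cannot belong to a single L-sequence. Your write-up is in fact more explicit about the reinterpretation of the hypothesis as equality of closed neighborhoods and about the key identity $N(v_i) \cup N(v_j) = N[v_k]$, both of which the paper leaves largely implicit.
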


\begin{proof}
For $i \neq j \neq m \in \{1,2,3\}$, $N[v_i] \setminus (N(v_j) \cup N(v_m)) = \{ \emptyset\}$ since $v_i \in N(v_j) \cup N(v_m)$, $v_j \in N(v_m)$, and $N(v_i)\setminus \{v_j\} = N(v_j)\setminus \{v_i\}$. Thus, $v_1$, $v_2$, and $v_3$ cannot all be included in an L-sequence, so $\gamma_{gr}^L(G) \neq n$.
\end{proof}

Thus, we know that if a graph on $n$ vertices contains a clique of size $k$, in order for the L-Grundy number to be equal to $n$, at most two of the vertices in the clique can have degree $k-1$. There exists graphs with cliques such that exactly two vertices have degree $k-1$. For example, consider $K_3$ with vertices $v_1, v_2, v_3$ and attach a single vertex, $v_4$ to $v_3$ as shown in Figure \ref{fig:cliqueexample}. The sequence $\{v_1, v_2, v_4, v_3\}$ is a $L$-Grundy sequence of length four.

\begin{figure}[!htbp]
 \centering
 \begin{tikzpicture}[scale=2]
\begin{scope}[every node/.style={scale=.75,circle,draw}]
    \node (A) at (0,0) {$v_1$};
    \node (B) at (.5,1) {$v_2$};
	\node (C) at (1,0) {$v_3$}; 
	\node (D) at (2,0) {$v_4$};

\end{scope}

\draw  (A) -- (B);
\draw  (A) -- (C);
\draw  (B) -- (C);
\draw  (D) -- (C);

\end{tikzpicture}
\caption{An example of a graph containing a clique $K_3$ where two vertices have degree $k-1 = 2$. This has an $L$-grundy sequence of length four.}
\label{fig:cliqueexample}
\end{figure}
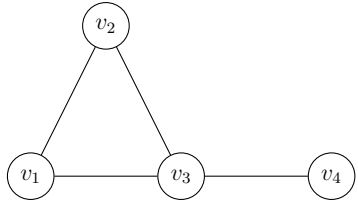

We can immediately define an infinite family of non-tree graphs whose L-Grundy domination number is equal to the number of vertices due to this result. 

\begin{corollary}\label{infinitefamily}
For all $n$, there exists a non-tree graph $G$, $|V(G)| = n$ such that $\gamma_{gr}^L(G) = n$.  
\end{corollary}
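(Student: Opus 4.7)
The plan is to extend the four-vertex example in Figure \ref{fig:cliqueexample} into an infinite family by attaching extra pendant leaves to the triangle. Concretely, for each $n \geq 4$ I would define $G_n$ on vertex set $\{v_1, v_2, \ldots, v_n\}$ to consist of a triangle on $\{v_1, v_2, v_3\}$ together with $n-3$ pendants $v_4, v_5, \ldots, v_n$, each adjacent only to $v_3$. Since $G_n$ contains a $3$-cycle, it is not a tree, and by construction $|V(G_n)| = n$.

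The heart of the argument is to exhibit an L-sequence of length $n$ in $G_n$, and the natural candidate---a direct generalisation of the length-$4$ sequence used in Figure \ref{fig:cliqueexample}---is
$$S = (v_1,\, v_2,\, v_4,\, v_5,\, \ldots,\, v_n,\, v_3).$$
I would then verify the L-condition at each position. At position $1$, $v_1$ footprints itself because $\bigcup_{j<1} N(v_j) = \emptyset$. At position $2$, $v_1 \in N[v_2]$ and $v_1 \notin N(v_1)$, so $v_1$ is a fresh footprint. For each position $i \in \{3, \ldots, n-1\}$, corresponding to a pendant $v_{i+1}$, the only vertex adjacent to $v_{i+1}$ is $v_3$, which has not yet appeared, so $v_{i+1} \notin \bigcup_{j<i} N(v_j)$ and $v_{i+1}$ footprints itself. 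Finally, at the last position, every pendant $v_j$ has $N(v_j) = \{v_3\}$, hence $\{v_4, \ldots, v_n\} \subseteq N[v_3] \setminus \bigcup_{j < n} N(v_j)$, so $v_3$ has plenty of valid footprints. Combined with the trivial upper bound $\gamma_{gr}^L(G_n) \leq n$, this gives $\gamma_{gr}^L(G_n) = n$.

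I do not expect a real obstacle here: the statement is a direct packaging of the pattern exhibited by the four-vertex example, and the only thing to be careful about is bookkeeping of the set $\bigcup_{j<i} N(v_j)$ after the pendants have been inserted---specifically, the fact that processing the pendants before $v_3$ leaves the pendants themselves outside this union, which is precisely what allows $v_3$ to still footprint at the end. For completeness one might also remark that the small cases $n=2,3$ (if one reads the corollary as holding for every $n \geq 2$) are covered by the edgeless graph $\overline{K_n}$, which is disconnected, hence not a tree, and satisfies $\gamma_{gr}^L(\overline{K_n}) = n$ since each vertex footprints itself.
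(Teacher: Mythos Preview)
Your argument is correct. The sequence $(v_1,v_2,v_4,\ldots,v_n,v_3)$ is indeed an L-sequence of length $n$ in your graph $G_n$, and your position-by-position verification is accurate; in particular the key observation that the pendants contribute only $\{v_3\}$ to the running union, so that $v_3$ still has $\{v_4,\ldots,v_n\}$ to footprint at the end, is exactly right.

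Your construction differs from the paper's, though both fall under the same ``clique plus leaves'' paradigm. The paper varies the clique size: it takes $K_k$, hangs a single leaf off each of $k-1$ of its vertices, and uses the sequence $(u_k,v_1,\ldots,v_{k-1},u_1,\ldots,u_{k-1})$. You instead fix the clique to be a triangle and hang all $n-3$ leaves off a single vertex. What your version buys is a construction that is directly parametrised by $n$ and hits every $n\geq 4$; the paper's construction, as written, produces graphs with $2k-1$ vertices and so only yields odd $n$. What the paper's version buys is a stronger illustration of the phenomenon discussed just before the corollary---namely that a clique of \emph{any} size can sit inside a graph with full L-Grundy number, provided at most two of its vertices are confined to the clique. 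Your remark on the residual small cases via $\overline{K_n}$ is fine if ``non-tree'' is read literally, though note that $\overline{K_n}$ is still a forest, which is arguably what the corollary is trying to step beyond.
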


\begin{proof}

Consider the graph  $K_k$ with vertices $u_i$ for $i \in [k]$ and attach a degree one vertex, $v_i$, to every vertex except one, without loss of generality $u_k$. Then, the $L$-sequence $S = \{u_k, v_i, ... , v_k, u_1, ... , u_{k-1} \}$. Since $N[v_i] \setminus \cup_{j = 1}^{i-1} N(v_j) \cup u_k = \{v_i\}$ and for $i \neq k$, $N[u_i] \setminus (\cup_{j = 1}^{i-1}N(u_j) \cup_{j = 1}^{k}N(v_j) \cup N(u_k)) = \{v_i\}$.
 
 \end{proof}

\subsection{Cycles}
\hfill\\
Since all graphs except forests contain cycles, it is natural to determine the $L$-grundy number of a cycle $C_n$, and attempt a characterization from there.

\begin{lemma}\label{lem:cycles}
For every $n \geq 3$, $\gamma_{gr}^L(C_n) = n - 1$.
\end{lemma}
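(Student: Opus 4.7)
The plan is to split the equality into two matching inequalities and dispatch the upper bound by appealing to the theorem we just proved.

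\textbf{Upper bound.} Since $C_n$ is $2$-regular we have $\delta(C_n)=2$, so Theorem~\ref{thm:conj_delta} immediately gives $\gamma_{gr}^L(C_n) \le n - 2 + 1 = n - 1$. (Alternatively, one can argue directly: if $S$ used all $n$ vertices with terminal vertex $v$, then $\bigcup_{u \in V\setminus\{v\}} N(u) = V(C_n)$ because every vertex of $C_n$ has a neighbor distinct from $v$, so $N[v]\setminus\bigcup_{j<n}N(v_j)=\emptyset$, contradicting the $L$-condition.)

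\textbf{Lower bound.} I would exhibit an explicit $L$-sequence of length $n-1$. Label the vertices cyclically as $v_1,\ldots,v_n$ with $v_i \sim v_{i\pm 1 \bmod n}$, and consider the sequence obtained by first listing all odd-indexed vertices $v_1, v_3, v_5, \ldots$ in order, and then all even-indexed vertices $v_2, v_4, v_6, \ldots$ in order, dropping one vertex at the very end so that exactly $n-1$ vertices remain (drop $v_{n-1}$ when $n$ is odd, drop $v_n$ when $n$ is even). Small verifications show this matches the sequences $(v_1,v_3)$ for $C_3$, $(v_1,v_3,v_2)$ for $C_4$, $(v_1,v_3,v_5,v_2)$ for $C_5$, and so on.

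\textbf{Verification.} The cleanest way to check the $L$-condition is to track the running union $U_i := \bigcup_{j < i} N(v_j)$ and, at each step, identify a specific vertex in $N[v_i] \setminus U_i$. For the odd-indexed block, after processing $v_1, v_3, \ldots, v_{2k-1}$ one has $U \subseteq \{v_n\} \cup \{v_2, v_4, \ldots, v_{2k}\}$, which contains no odd-indexed vertex strictly greater than $1$, so the next vertex $v_{2k+1}$ L-footprints itself (or, at the wrap-around step where the first half ends with $v_n$ for odd $n$, it L-footprints $v_1$). For the even-indexed block, the inductive invariant is that right before processing $v_{2k}$, the set $U$ is missing precisely the odd indices $2k+1, 2k+3, \ldots$ (up to the relevant endpoint), so $v_{2k}$ L-footprints $v_{2k+1}$. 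The bookkeeping splits cleanly into the two parity cases for $n$.

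\textbf{Main obstacle.} The only delicate point is the transition from the odd-indexed block to the even-indexed block, where the wrap-around neighbor $v_n$ (or the last odd-indexed vertex just processed) briefly disturbs the clean pattern of the running union. Confirming that the chosen omitted vertex indeed leaves a valid footprint for every remaining vertex, and that no parity case introduces an edge-case collision (this is where $C_3$ and $C_4$ need a quick sanity check because $v_n$ coincides with or is adjacent to vertices in the first two positions), is the bulk of the routine casework. Once that invariant is established, both inequalities combine to give $\gamma_{gr}^L(C_n) = n - 1$.
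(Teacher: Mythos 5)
Your proof is correct, and for the lower bound it takes essentially the same route as the paper: an explicit $L$-sequence that lists the odd-indexed vertices first and then the even-indexed ones, with one vertex omitted. The only cosmetic differences are that the paper traverses the even-indexed block in decreasing order and omits $v_2$, whereas you traverse it in increasing order and omit $v_n$ (resp.\ $v_{n-1}$); both verify the same way, via the running-union bookkeeping you describe. (One small imprecision: for $n$ even, just before $v_2$ is processed the union is missing $v_1$ as well as $v_3, v_5, \ldots$, so it is not ``precisely'' the odd indices from $2k+1$ on; this is harmless, since $v_2$ still footprints $v_3$.) Where you genuinely add something is the upper bound: the paper's proof only exhibits a sequence of length $n-1$ and never rules out a sequence of length $n$, whereas you close that gap twice over --- once by invoking Theorem~\ref{thm:conj_delta} with $\delta(C_n)=2$ (not circular, since that theorem is proved independently in Section~\ref{sec:conj}), and once by the direct observation that every vertex of $C_n$ has a neighbour distinct from the terminal vertex, so the last vertex of a putative full-length sequence would have nothing left to footprint. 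Your version is therefore the more complete of the two.
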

\begin{proof}
Let $C_n = v_1v_2 \cdots v_nv_1$ We consider two cases; $n$ even, and $n$ odd.

We construct a L-sequence $S$ as follows. The first $\lceil n/2 \rceil$ vertices to be added to $S$ are the vertices with odd indices in increasing order. Immediately following are the vertices with even indices in descending order excluding $v_2$.

If $n$ is even, $S_{n/2} = (v_1, v_3, \ldots, v_{n-1})$ is a L-grundy subsequence because, with the exception of $v_1$ and $v_{n-1}$, each vertex $v_i$, with an odd index $i$, footprints vertex $v_{i+1}$. In the case of $v_1$, it footprints both $v_2$ and $v_n$ while $v_{n-1}$ footprints itself. The remaining vertices form a L-sequence because, excluding $v_n$, each vertex $v_i$, with even index $i$, footprints $v_{i-1}$. $v_n$ footprints both $v_1$ and $v_{n-1}$.

The case for $n$ odd is identical. $S_{\lceil n/2 \rceil}$ consists of all vertices with odd indices, while the remaining are those with even indices in descending order, except $v_2$. The only difference is that no vertices are required to footprint themselves. Every vertex $v_i$, with an odd index $i$, footprints $v_{i+1}$ except $v_1$ and $v_n$. $v_1$ footprints both $v_2$ and $v_n$, while $v_n$ only footprints $v_1$ since its even neighbor was footprinted by $v_{n-2}$. Finally, every vertex $v_i$ with even index $i$ footprints $v_{i-1}$.
\end{proof}

We can easily obtain a graph $G'$ whose L-Grundy domination number is equal to the number of vertices by adding a single vertex and edge to $C_n$.

\begin{thm}\label{thm:cyclewithleaf}
Let $G$ be an $n$-vertex graph obtained by attaching a leaf to a cycle. Then $\gamma_{gr}^L(G) = n$
\end{thm}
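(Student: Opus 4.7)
Since $\gamma_{gr}^L(G) \le n$ holds trivially, the task reduces to exhibiting an $L$-sequence of $G$ of length $n$. Write the cycle as $v_1 v_2 \cdots v_{n-1} v_1$ and assume (without loss of generality) that the leaf $\ell$ is attached to $v_1$. I propose the sequence
\[
S = \bigl(\ell,\ v_2, v_4, \ldots, v_{2\lfloor (n-1)/2 \rfloor},\ v_{2\lceil (n-1)/2 \rceil-1}, v_{2\lceil (n-1)/2 \rceil-3}, \ldots, v_3,\ v_1\bigr),
\]
namely $\ell$ first, then all even-indexed cycle vertices in ascending order, then all odd-indexed cycle vertices in descending order ending at $v_1$. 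Its length is $1 + \lfloor(n-1)/2\rfloor + \lceil(n-1)/2\rceil = n$.

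\textbf{Verification.} Vertex by vertex, I would check $N[u] \setminus \bigcup_{j<i(u)} N(v_j) \neq \emptyset$ by exhibiting an explicit footprint. The leaf $\ell$ footprints itself. When the even vertex $v_{2i}$ is added, a short induction on $i$ gives $\bigcup_{\text{prev}} N(v_j) = \{v_1, v_3, \ldots, v_{2i-1}\}$, since $N(\ell) = \{v_1\}$ and $N(v_{2k}) = \{v_{2k-1},v_{2k+1}\}$ for $k<i$; because both cycle-neighbors of $v_{2i}$ are odd-indexed vertices not yet in the sequence, $v_{2i}$ itself lies outside this union and footprints itself. After the ascending phase finishes, $\bigcup_{\text{prev}} N(v_j)$ equals exactly the set of odd-indexed cycle vertices. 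The first odd vertex in the descending phase then footprints an adjacent even cycle vertex (whose own cycle-neighbors have not yet been added), and each later odd vertex $v_{2i-1}$ footprints $v_{2i-2}$, since only $v_{2i}$ among $v_{2i-2}$'s two cycle-neighbors has been added by that point. Finally, $v_1$ footprints $\ell$: at that moment every cycle vertex lies in $\bigcup_{\text{prev}} N(v_j)$, but $\ell$ does not, because its unique neighbor $v_1$ has not yet appeared in $S$.

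\textbf{Main obstacle.} The care-requiring part is the bookkeeping of $\bigcup_{j<i(u)} N(v_j)$ through the descending phase, especially verifying that $v_2$ is not absorbed into the union before $v_3$ is processed. This holds because the only cycle-neighbors of $v_2$ are $v_1$ and $v_3$, which are the final two vertices of $S$. The parity of $n-1$ induces only cosmetic differences (whether the largest cycle index is even or odd, and whether the descending block starts with $v_{n-1}$ or $v_{n-2}$), so no genuine case split is needed; a single argument covers both parities.
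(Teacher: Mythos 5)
Your construction is correct and is essentially the paper's proof: both exhibit an explicit $L$-sequence that starts with the leaf (which footprints the support vertex), runs through the cycle in a parity-alternating order (one parity class ascending, the other descending), and ends with the support vertex, which footprints the leaf. The only cosmetic difference is that the paper obtains the middle portion by citing its Lemma~\ref{lem:cycles} (odd indices ascending, then even indices descending, omitting the support vertex), whereas you re-derive an equivalent ordering inline with the roles of odd and even swapped.
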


\begin{proof}
Let $C_k = v_1v_2\cdots v_kv_1$ be a cycle on $k$ vertices, and let $u$ be a leaf attached to an arbitrary vertex $C_k$. By relabeling the vertices of the cycle if necessary, we may assume $u$ is adjacent to $v_2$. From Lemma \ref{lem:cycles}, we have a L-sequence $S$ of length $k-1$. We obtain a new L-sequence $S'$ from $S$ by first prepending $S$ with $u$, then adding $v_2$ to the end. $S'$ is a L-sequence because, even though $v_1$ no longer footprints $v_2$, it still footprints $v_n$, $u$ footprints $v_2$, and $v_2$ is the only vertex that can footprint $u$, aside from $u$.
\end{proof}

\par

\subsection{Families of graphs }
\hfill\\
Let $G$ be an $n$ vertex graph such that $\gamma_{gr}^L(G) < n$. We now give a method of obtaining graphs $G'$ from $G$ such that $\gamma_\mathrm{gr}^L(G') = n'$, where $n'$ is the number of vertices of $G'$.

\begin{thm}\label{addingmultiplevertices}
Let $G$, $|V(G)| = n$. If $\gamma_{gr}^L(G) = k < n$, there exists a graph $G'$, $|V(G')| = 2n-k$ where $\gamma_{gr}^L(G') = 2n-k$
\end{thm}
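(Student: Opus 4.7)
The plan is to construct $G'$ by attaching a single new leaf to every vertex of $G$ that lies outside some fixed maximum L-sequence. Fix a maximum L-sequence $S=(v_1,\ldots,v_k)$ of $G$ and enumerate $\widehat{T} := V(G)\setminus\widehat{S} = \{u_1,\ldots,u_{n-k}\}$. Form $G'$ by adding, for each $i\in[n-k]$, a new vertex $w_i$ together with the edge $u_iw_i$; then $|V(G')| = n + (n-k) = 2n-k$ as required. Because $\gamma_{gr}^L(H)\le |V(H)|$ holds for every graph $H$, it suffices to exhibit an L-sequence of $G'$ of length $2n-k$.

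The candidate sequence I would use is
\[
S' = (v_1,\ldots,v_k,\; w_1,\ldots,w_{n-k},\; u_1,\ldots,u_{n-k}),
\]
and I would verify it block by block. For the first block, no $v_j$ lies in $\widehat{T}$, so no $v_j$ gains a new neighbor in $G'$, and thus $(v_1,\ldots,v_k)$ remains a valid L-sequence in $G'$ with the same footprints as in $G$. For the second block, each $w_i$ is pendant with unique neighbor $u_i$, so $w_i$ does not lie in the open neighborhood of any vertex appearing earlier in $S'$; hence $w_i$ L-footprints itself. For the third block, although $u_i$ has already been covered (by $w_i$ in the previous block), the leaf $w_i\in N_{G'}[u_i]$ has only $u_i$ as its neighbor, so $w_i$ cannot appear in $N_{G'}(v_j)$, $N_{G'}(w_j)$, or $N_{G'}(u_j)$ for any $j\ne i$; therefore $w_i$ is still uncovered at the moment $u_i$ is appended, and $u_i$ L-footprints $w_i$.

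The only step requiring real attention is the third block, where one must track that each leaf $w_i$ survives uncovered all the way until its own $u_i$ is added; this is immediate from the pendant structure of the added vertices, so no genuine obstacle arises. Concatenating the three blocks yields a valid L-sequence of length $2n-k = |V(G')|$, which forces $\gamma_{gr}^L(G') = 2n-k$, as desired.
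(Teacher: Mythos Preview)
Your proposal is correct and follows essentially the same approach as the paper: attach a new pendant leaf to each vertex outside a fixed maximum L-sequence, then list the original sequence, then the leaves, then the remaining original vertices. Your verification of the three blocks is in fact slightly more careful than the paper's (which has some loose indexing in the final union), but the construction and the argument are the same.
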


\begin{proof}
Let $G$ be a graph on $n$ vertices with $\gamma_{gr}^L(G) = k < n$ where $S$ is its maximal L-Grundy sequence. Let $G'$ be a graph that is equal to $G$ with a single vertex, $v_i$ connected to each vertex not in $S$, $u_i$. Then, $S$ is still a $L$-grundy sequence since the neighborhoods of all vertices in $\widehat{S}$ remained unchanged. Let $S'$ be obtained from $S$ by setting the first $k$ elements of $S'$ to the whole of $S$, followed by all leaves $v_i$ in any order, followed by all vertices in $G \setminus \widehat{S}$. This sequence has length $2n-k$ since there are $n$ vertices in $G$ and $n-k$ added vertices. It remains to show that $S'$ is a $L$-grundy sequence.

As stated earlier, $S$ is a L-sequence. Let us denote the vertices of $S$ as $\{x_i\}_{i \in [k]}$ for some $k \in \mathbf{N}$. Now for any $m < n-k$, $v_m \in N[v_m] \setminus (\cup_i N(x_i) \cup_{j=1}^{m-1} N(v_j)) $  since the only neighbor of $v_m$ is a vertex in $G \setminus \widehat{S}$. Finally, for $m < n-k$, $v_m \in N[u_m] \setminus (\cup_i N(x_i) \cup_{j=1}^{n-k} N(v_j) \cup_{l=1}^{n-k} N(u_l)) $ since the only neighbor of $v_m$ is $u_m$. 
\end{proof}

Recall that $S$ is a t-Grundy sequence if $N(v_i) \setminus \cup_{j=1}^{i-1}N(v_j) \neq \emptyset$, for all $i \in [k] \setminus \{1\}$. The following characterization of graphs with t-Grundy dominating number equal to the number of vertices was proved by Bre{\v{s}}ar, Henning, and Rall in \cite{brevsar2016total}.

\begin{prop}\label{prop:tcharacter}[\cite{brevsar2016total}]
Let $G$ be an $n$-vertex graph. $\gamma_{gr}^t(G) = n$ if and only if there exists an integer $k$ such that $n = 2k$ and the vertices can be labeled $\{x_1, \ldots, x_k, y_1, \ldots, y_k\}$ such that the following holds. For each $i \in [k]$, $x_i$ is adjacent to $y_i$, the set of vertices $\{x_1, \ldots, x_k\}$ is independent, and $y_j$ adjacent to $x_j$ means $i \geq j$.
\end{prop}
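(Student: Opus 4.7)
The plan is to handle the two directions separately, with the backward direction realized via an explicit sequence and the forward direction by induction on $n$. For the backward direction, I would exhibit the sequence
\[
S = (x_k, x_{k-1}, \ldots, x_1, y_1, y_2, \ldots, y_k)
\]
and check that each vertex t-footprints a new vertex. The vertex $x_i$ placed in position $k+1-i$ t-footprints $y_i$, since the triangular adjacency condition rules out $y_i \sim x_j$ for any $j > i$, so $y_i$ lies outside $\bigcup_{j>i} N(x_j)$. The vertex $y_i$ placed in position $k+i$ t-footprints $x_i$: the independence of $\{x_1, \ldots, x_k\}$ gives $x_i \notin \bigcup_j N(x_j)$, and the triangular condition gives $x_i \notin N(y_\ell)$ for all $\ell < i$.

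For the forward direction, given a t-Grundy sequence $(v_1, \ldots, v_n)$ of length $n$, I would pick a t-footprint $u_i \in N(v_i) \setminus \bigcup_{j<i} N(v_j)$ for each $i$. A short injectivity argument (if $u_i = u_j$ with $i < j$, then $u_j \in N(v_i)$, contradicting its defining property) shows $i \mapsto u_i$ is a permutation of $V(G)$. Writing $u_i = v_{\sigma(i)}$, the key observation is that $u_n$ must be a leaf attached to $v_n$: its neighbors in $V(G) = \{v_1, \ldots, v_n\}$ are forbidden to lie in $\{v_1, \ldots, v_{n-1}\}$, and $u_n \neq v_n$, so $v_n$ is its unique neighbor. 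This in turn forces $\sigma(\sigma(n)) = n$, since the only possible t-footprint of $u_n$ is $v_n$, and the pair $\{u_n, v_n\}$ becomes the natural candidate for $\{x_k, y_k\}$.

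I would then delete $u_n$ and $v_n$ to form $G'$ on $n-2$ vertices and verify that the residual subsequence (in the original order) remains a t-Grundy sequence in $G'$; this gives $\gamma_{gr}^t(G') = n - 2$, and the inductive hypothesis supplies a labeling $x_1, \ldots, x_{k-1}, y_1, \ldots, y_{k-1}$ of $V(G')$ with the required structure. Setting $x_k := u_n$ and $y_k := v_n$ extends the labeling to all of $V(G)$: independence of $\{x_1, \ldots, x_k\}$ is preserved because $x_k$ has only $y_k$ as a neighbor, and the triangular adjacency condition is preserved because any edge between $y_k$ and some $x_j$ with $j < k$ trivially satisfies $k \geq j$. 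The main obstacle is executing this inductive step carefully: one must confirm that peeling off the leaf-pendant pair $\{u_n, v_n\}$ preserves the t-Grundy property of the residual subsequence (using that $\sigma$ is a permutation so that no other $u_i$ equals $u_n$ or $v_n$), and then verify that the three structural conditions genuinely transfer to $G$. The driving insight is that a length-$n$ t-Grundy sequence always forces the last vertex to receive a pendant footprint, which peels off cleanly as a pair.
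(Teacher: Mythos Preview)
The paper does not include its own proof of this proposition: it is quoted from \cite{brevsar2016total} as a known characterization and used only as background, so there is no in-paper argument to compare your proposal against.

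That said, your proof is correct. The explicit sequence $(x_k,\ldots,x_1,y_1,\ldots,y_k)$ works for the backward direction exactly as you describe (reading the evident typo in the statement as ``$y_i$ adjacent to $x_j$ implies $i\geq j$''). For the forward direction, the key step---that the last footprint $u_n$ is a leaf attached to $v_n$, forcing $\sigma(\sigma(n))=n$---is sound, and your inductive peel-off goes through: for each remaining index $i\notin\{n,\sigma(n)\}$ the original footprint $u_i$ survives in $G'=G-\{u_n,v_n\}$ because $u_i\neq u_n$ (injectivity of $\sigma$) and $u_i\neq v_n=u_{\sigma(n)}$ (since $i\neq\sigma(n)$), and it still lies outside $\bigcup_{j<i} N_{G'}(v_j)\subseteq\bigcup_{j<i} N_G(v_j)$. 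Setting $x_k=u_n$, $y_k=v_n$ then preserves all three structural conditions, since $x_k$ has $y_k$ as its only neighbor. This is essentially the argument one finds in the cited source.
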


If $G$ is an $n$-vertex graph with $\gamma_{gr}^t(G) = n$, then $\gamma_{gr}^L(G) = n$. If $G$ does not satisfy the conditions of Proposition \ref{prop:tcharacter}, then a new graph $G'$ can be obtained from $G$ by adding a leaf to every vertex of degree $2$ or higher such that $\gamma_{gr}^L(G')  = n'$ where $n'$ is the number of vertices in $G'$. The following theorem shows that full L-Grundy number can be achieved by attaching leaves to every vertex with degree $3$ or higher.

\begin{thm}\label{thm:leaves}
Let $G$ be an $n$-vertex graph with at least one vertex of degree three or greater that satisfies the condition that every vertex $v \in V(G)$ such that $\deg(v) \geq 3$ has a neighbor of degree one. Then $\gamma_{gr}^L (G) = n$. 
\end{thm}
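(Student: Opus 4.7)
The plan is to build an L-sequence of length $n$ by carefully ordering the vertices of $G$ in three phases. To organize this I would partition $V(G)$ into four sets according to degree and leaf-adjacency: the set $L$ of vertices of degree exactly $1$ (leaves); the set $A$ of degree-$2$ vertices adjacent to a leaf; the set $B$ of degree-$2$ vertices with no leaf neighbor; and the set $H$ of vertices of degree at least $3$. By the hypothesis of the theorem, every vertex of $H$ has a leaf neighbor.

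In the first phase I place every vertex of $L$ in an arbitrary order. The only neighbor of each leaf $\ell$ is a non-leaf that is not placed until Phase~2 or Phase~3, so $\ell$ is still unopened at its turn and footprints itself. In the final phase I place every vertex of $A \cup H$ in an arbitrary order. Each such $v$ has a leaf neighbor $\ell_v$ whose unique neighbor in $G$ is $v$ itself, so $\ell_v$ remains unopened up through the moment $v$ is placed; hence $v$ footprints $\ell_v$.

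The heart of the proof is Phase~2, which handles $B$. Because every vertex of $B$ has degree exactly $2$, the induced subgraph $G[B]$ has maximum degree at most $2$, so it is a disjoint union of paths and cycles. Any cycle in $G[B]$ would consist entirely of degree-$2$ vertices of $G$ with no leaf neighbors, hence would form a connected component of $G$ containing no vertex of degree $\geq 3$; this contradicts the hypothesis (taking $G$ to be connected, which I adopt as the intended setting). So $G[B]$ is a disjoint union of paths. For each such path $P = v_1 v_2 \cdots v_k$, the endpoint $v_1$ has a second neighbor $w_0$ outside $P$ and similarly $v_k$ has a second neighbor $w_{k+1}$ outside $P$; since $v_1, v_k \in B$ these off-path neighbors must lie in $A \cup H$, so they are already opened by the end of Phase~1.

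Inside each path $P$ I would use the ordering from the proof of Lemma~\ref{lem:cycles}: the odd-indexed vertices $v_1, v_3, v_5, \ldots$ in ascending order, followed by the even-indexed vertices in descending order. A direct tally of opened vertices shows that the odd placements open only the even-indexed path vertices along with $w_0$ (and $w_{k+1}$ if $k$ is odd), so each odd-indexed $v_{2t+1}$ is itself unopened at its turn and footprints itself. In the even descending phase, when $v_{2t}$ is placed its odd neighbor $v_{2t-1}$ is still unopened (prior even placements $v_{2j}$ with $j > t$ open only $v_{2j \pm 1}$, none of which equals $v_{2t-1}$), so $v_{2t}$ footprints $v_{2t-1}$. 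The main obstacle is carrying out this bookkeeping across the endpoint cases carefully, distinguishing the parities of $k$, treating the largest-odd index (which is $k$ if $k$ is odd and $k-1$ if $k$ is even), the first even placement, and the final even $v_2$ (which must footprint $v_1$); but in every case the footprint set is non-empty. Concatenating the three phases yields an L-sequence of length $n$, so $\gamma_{gr}^L(G) = n$.
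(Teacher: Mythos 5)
Your proof is correct and reaches the same destination by the same basic strategy as the paper---process the vertices in degree-based blocks, saving the high-degree vertices for last so that each can footprint its private leaf---but the details differ in ways worth noting. The paper orders the blocks as (degree $2$, degree $1$, degree $\geq 3$) and walks each chain of degree-$2$ vertices left to right with an ad hoc swap at the final two vertices, whereas you order them as (leaves, then $B$, then $A\cup H$) and recycle the odd-ascending/even-descending ordering from Lemma~\ref{lem:cycles} inside each path of $G[B]$. Two things your version buys: first, by splitting the degree-$2$ vertices into those with a leaf neighbor ($A$, deferred to the last phase where they footprint that leaf) and those without ($B$), you cleanly cover a case the paper's analysis never explicitly addresses, namely degree-$2$ vertices adjacent to leaves; second, your observation that a cycle in $G[B]$ would be an entire component isolates exactly where connectivity is needed. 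On that last point you are right to flag the hypothesis: the statement as written does not assume connectivity and is in fact false without some such assumption (e.g.\ the disjoint union of $C_3$ with a star $K_{1,3}$ satisfies the stated hypotheses but has L-Grundy number $6$ on $7$ vertices), and the paper's own argument silently makes the same assumption when it treats every maximal run of degree-$2$ vertices as a path with higher-degree endpoints. So your proposal is not merely an alternative write-up; it is a tighter version of the same argument.
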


\begin{proof}
We begin with an empty L-sequence $S$ and add vertices to it in the following order: vertices of degree $2$, vertices of degree $1$, and vertices of degree $3$ or higher.

First, consider a vertex $v$ of degree $2$ such that each neighbor has degree $3$ or higher. Since higher degree vertices have not been added yet, $v$ footprints such neighbors unless another degree $2$ vertex has footprinted them previously, in which case, $v$ footprints itself. Therefore, if $v$ has degree $2$, and both of its neighbors have higher degrees, then we add them in any order to $S$. Next, suppose that $u$ and $v$ are two adjacent vertices with degree $2$, and that each has a single neighbor of degree $3$ or higher. If both of those higher degree neighbors have not been previously footprinted, then it is clear that $u$ and $v$ are admissible to $S$. If their neighbors have already been footprinted, then it is also clear that they may be added as $u$ would footprint $v$ and vice versa. Finally, suppose that $v_1v_2 \cdots v_n$ is a path (of degree $2$ vertices) of length $n \geq 3$ such that the endpoints $v_1$ and $v_n$ both have a single neighbor of higher degree. Then, we add these vertices in the obvious order, stopped at $v_{n-2}$. $v_1$ footprints $v_2$, $v_2$ footprints $v_3$, and so on. If the higher degree neighbor of $v_n$ has not been footprinted, then we can add $v_{n-1}$ and $v_n$ in that order. If it has been footprinted, then we add $v_n$ before $v_{n-1}$ since $v_n$ footprints itself, and finishing by adding $v_{n-1}$, which footprints $v_n$. This completes the addition of all degree $2$ vertices to $S$.

Next, it is clear that we may add all degree $1$ vertices because they footprint themselves if their higher degree neighbors have been previously footprint, or they footprint themselves and their higher degree neighbors. And to finish, since every vertex of degree $3$ or higher has a neighbor with degree $1$, it must be added to $S$ since it is the only vertex that can footprint the degree $1$ vertices.
\end{proof}

This result implies that given any graph, $G$, we can derive a graph  from it $G'$ such that $\gamma_{gr}^L (G') = n$. This condition is not necessary for $\gamma_{gr}^L(G) = n$, however, as it was shown in \cite{brevsar2020grundy}, any forest on $n$ vertices has an $L$-Grundy number of size $n$, and there exists forests that do not satisfy this condition, such as a collection of paths of length at least three. 

\section{Graph operations}\label{sec:graphoperations}

\subsection{Edge Removal}
\hfill\\
In  \cite{brevsar2016graphchanges} it was shown how the Grundy dominating number changes under the operation of edge removal. In particular, they showed that under single edge removal, the Grundy domination number can increase or decrease by at most one. We show that the change in the L-Grundy number differs slightly due to the fact that vertices may footprint themselves. Indeed, we show that the L-Grundy domination number increases by at most two or decrease by at most one.

\begin{thm}\label{thm:remove_edge}
If $G$ is a graph and $e \in E(G)$, then $\gamma_{gr}^L(G) - 1 \leq \gamma_{gr}^L(G - e) \leq \gamma_{gr}^L(G) + 2$.
\end{thm}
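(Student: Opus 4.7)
The plan is, for each direction, to take a maximum $L$-sequence in one graph and obtain an $L$-sequence in the other by deleting at most a small constant number of vertices. Write $e = xy$. The driving observation is that $N_{G-e}(v) = N_G(v)$ and $N_{G-e}[v] = N_G[v]$ for every $v \notin \{x,y\}$, whereas for $v \in \{x,y\}$ edge removal strips the other endpoint from both the open and closed neighborhoods.

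For the upper bound $\gamma_{gr}^L(G-e) \le \gamma_{gr}^L(G)+2$, I would take a maximum $L$-sequence $S' = (v_1, \ldots, v_k)$ in $G-e$ with footprints $w_1, \ldots, w_k$, and let $S$ be the subsequence of $S'$ obtained by deleting $x$ and $y$ wherever they occur. Every remaining $v_i$ satisfies $v_i \notin \{x,y\}$, so $N_G[v_i] = N_{G-e}[v_i]$; and every $v_j$ preceding $v_i$ in $S$ also lies outside $\{x,y\}$, so $N_G(v_j) = N_{G-e}(v_j)$. Hence the same $w_i$ witnesses the $L$-sequence condition in $G$, producing an $L$-sequence of length at least $k-2$.

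For the lower bound $\gamma_{gr}^L(G-e) \ge \gamma_{gr}^L(G) - 1$, take a maximum $L$-sequence $S = (v_1, \ldots, v_k)$ in $G$. If neither $x$ nor $y$ lies in $S$, the same $S$ works in $G - e$ verbatim, since open and closed neighborhoods of every $v_i$ are preserved and the subtracted union can only shrink. Otherwise, assume both $x, y \in S$ with $i(x) < i(y)$; the case where only one of them is in $S$ is analogous and easier. Call it a \emph{problem at $x$} if $N_G[x] \setminus \bigcup_{j<i(x)} N_G(v_j) = \{y\}$, and a \emph{problem at $y$} if $y \in \bigcup_{j < i(y),\, v_j \neq x} N_G(v_j)$ together with $N_G[y] \setminus \bigcup_{j<i(y)} N_G(v_j) = \{x\}$. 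A direct computation, carefully tracking which neighborhoods change when going from $G$ to $G-e$, shows these are precisely the conditions under which the corresponding endpoint loses all its footprints in $G-e$.

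The key step, and the main obstacle I anticipate, is showing the two problems are mutually exclusive. If the problem at $x$ holds, then $x \in N_G[x] \setminus \{y\}$ forces $x \in \bigcup_{j < i(x)} N_G(v_j)$, so some $v_{j_0}$ with $j_0 < i(x)$ is adjacent to $x$ in $G$; but then $x \in \bigcup_{j < i(y)} N_G(v_j)$, contradicting $x \in N_G[y] \setminus \bigcup_{j < i(y)} N_G(v_j)$ as demanded by the problem at $y$. Hence at most one of $x, y$ needs to be deleted. A final routine check confirms the resulting sequence remains an $L$-sequence in $G - e$: deleting a preceding vertex only shrinks each later subtracted union, and for the surviving member of $\{x, y\}$ the negation of its own problem supplies an alternative footprint (either the vertex itself, when it is not yet in the subtracted union, or any other element of $N_G[\cdot]\setminus\bigcup N_G(v_j)$ distinct from the deleted endpoint). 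This yields $\gamma_{gr}^L(G-e) \ge k - 1$ and completes the proof.
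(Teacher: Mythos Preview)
Your proposal is correct and follows essentially the same approach as the paper: for the upper bound, both delete the endpoints of $e$ from an $L$-sequence of $G-e$; for the lower bound, both show that at most one endpoint needs to be deleted from an $L$-sequence of $G$, via the observation that the two ``problem'' cases are mutually exclusive. Your mutual-exclusivity argument (problem at $x$ forces $x$ into the earlier subtracted union, contradicting $x$ being $y$'s unique footprint) is the contrapositive of the paper's observation that if $v$ footprints $u$ with $i(u)<i(v)$ then $u$ footprints itself; your write-up is in fact somewhat more carefully organized, with the explicit extra clause in the ``problem at $y$'' definition handling the self-footprinting of $y$ in $G-e$ that the paper glosses over.
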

\begin{proof}
Let $e = uv \in E(G)$ and set $G - uv = G'$. For the lower bound, let $S = \{v_1, ... , v_k\}$ be a L-sequence of $G$. Let us consider $G'$. If $u, v \notin \widehat{S}$, then $S$ is an L- sequence of $G'$. Without loss of generality, let $u$ be in $\widehat{S}$. Removing $u$ from $S$ still gives an L-sequence of $G'$ since removing $u$ from $\widehat{S}$ gives an L-sequence of $G$. This is because removing a vertex $v_i \in \widehat{S}$ does not increase the size of $\cup_{j \in J} N(v_j)$ for $i \in J$.

Without loss of generality, suppose $u$ footprints $v$ and $v$ footprints $y \neq u$. In $G - e$, $u$ can no longer footprint $v$, so if it does not footprint itself or another vertex, it is removed from the L-sequence. If $v$ does not have any other neighbor in $\widehat{S}$ other than $u$, then we add the appropriate vertex that footprints it. Thus, $|S'|$ is equal to $|S|$, $|S \setminus \{u\}|$, $|S \cup \{v'\}|$, or $|(S \setminus \{u\}) \cup \{v'\} |$, where $v'$ is the footprinter of $v$.

The only possibility in which we may need to remove two vertices from the L-sequence is if $u,v \in \widehat{S}$ and we have that $u$ footprints $v$ and $v$ footprints $u$. If either $u$ or $v$ footprinted other vertices as well, then they would be left in the L-set, so we assume that $u$ footprints only $v$ and $v$ footprints only $u$. Without loss of generality, we may assume that $i(u) < i(v)$ and, since $v$ footprints $u$, $u$ footprints itself. Then, in $G-e$, $u$ still footprints itself, therefore $u$ must remain in the new L-sequence. Indeed, let $S'$ be obtained from $S$ by removing $v$, thus, $|S'|$ is equal to $|S|-1$, $|S|$, or $|S| + 1$.

To see the upper bound, if $S'$ is an L-sequence of $G-e$, then $S := S' \setminus \{u,v\}$ is an L-sequence of $G$ and the result follows. Note that since a single edge is removed, all vertices not incident to edge and that were in $S'$ can still be included in $S$
\end{proof}

\begin{cor}\label{cor:k-edge-removal}
Let $G$ be a graph and let $G'$ be obtained from $G$ by adding $k$ edges to $G$. Then $\gamma_{gr}^L(G) - k \leq \gamma_{gr}^L(G') \leq \gamma_{gr}^L(G) + 2k$.
\end{cor}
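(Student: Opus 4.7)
The plan is a straightforward induction on $k$, using Theorem \ref{thm:remove_edge} as the one-edge step. The base case $k=0$ is immediate since $G'=G$, and the inequality holds with equality. For the inductive step, assume the claim for any sequence of $k-1$ single-edge modifications, and let $G'$ differ from $G$ by $k$ edges. I would introduce an intermediate graph $H$ that differs from $G$ by $k-1$ of the $k$ edges and from $G'$ by exactly one edge $e$; then $H = G' - e$ (or $G' = H - e$, depending on which direction we read the operation).

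Applying the inductive hypothesis to $H$ gives
\[
\gamma_{gr}^L(G) - (k-1) \;\leq\; \gamma_{gr}^L(H) \;\leq\; \gamma_{gr}^L(G) + 2(k-1),
\]
and applying Theorem \ref{thm:remove_edge} to the single edge $e$ relating $H$ and $G'$ gives
\[
\gamma_{gr}^L(H) - 1 \;\leq\; \gamma_{gr}^L(G') \;\leq\; \gamma_{gr}^L(H) + 2.
\]
Adding the two lower bounds and the two upper bounds yields
\[
\gamma_{gr}^L(G) - k \;\leq\; \gamma_{gr}^L(G') \;\leq\; \gamma_{gr}^L(G) + 2k,
\]
as desired. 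In other words, each of the $k$ one-edge steps can shrink the L-Grundy number by at most $1$ and grow it by at most $2$, so after $k$ steps the cumulative change lies in $[-k,\,2k]$.

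The only subtlety I anticipate is book-keeping the asymmetry between the $-1$ and $+2$ bounds of Theorem \ref{thm:remove_edge} and making sure the direction of the edge modification is consistent: applying Theorem \ref{thm:remove_edge} to the pair $(H, G'=H-e)$ bounds $\gamma_{gr}^L(H-e)$ in terms of $\gamma_{gr}^L(H)$, which is the direction the induction actually needs. No structural analysis of L-sequences is required beyond what was already done in the proof of Theorem \ref{thm:remove_edge}; the corollary is purely a telescoping of that single-step estimate.
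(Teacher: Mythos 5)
Your telescoping induction is exactly the intended argument: the paper states this corollary without proof, as an immediate iteration of Theorem \ref{thm:remove_edge}, and your one-edge-at-a-time decomposition with the intermediate graph $H$ is the obvious way to make that precise. One caveat: your argument (correctly) bounds the effect of \emph{removing} $k$ edges, which matches the corollary's label and its stated bounds, but the corollary's text literally says ``adding $k$ edges''---for addition the roles of the $-1$ and $+2$ reverse and iteration would instead give $\gamma_{gr}^L(G) - 2k \leq \gamma_{gr}^L(G') \leq \gamma_{gr}^L(G) + k$, so the discrepancy is a typo in the statement rather than a gap in your proof.
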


To illustrate these changes in the L-Grundy number, we now provide an example of a graph achieving each value in the theorem's bounds. Consider first the increase in two of the L-Grundy number of a graph $G = K_4 \, \Box \, K_2$. We first note that $\gamma_{gr}^L(K_4  \, \Box \, K_2) = 4$. To see this, first label the vertices of the $K_4$'s with $\{v_1, v_2, v_3, v_4\}$ for the first, and $\{u_1, u_2, u_3, u_4\}$ for the second, and recall that $v_i$ is adjacent to $u_i$. Suppose by contradiction that $\gamma_{gr}^L(K_4  \, \Box \, K_2) \geq 5$. Then there is at least one element from each $K_4$ in $S$, i.e. at least one $u_i$ and at least one $v_j$, and, without loss of generality, at least three elements in $\{u_1, u_2, u_3, u_4\}$ are in $S$. If exactly three $u_i$ are in $S$, then there are at least two elements from $\{v_1, v_2, v_3, v_4\}$. Thus $u_i$ and $v_i$ and $u_j$ and $v_j$ in $S$. Notice, however, that $u_i$ and $v_i$ footprint everything, so the L-sequence is maximal when they both are in $S$. The same is true for $u_j$ and $v_j$. Thus, without loss of generality, place all $u_i$ in $S$ first. This implies everything in $G$ is footprinted except for one $v_k$. Once either $v_i$ or $v_j$ is placed in $S$, $v_k$ will be footprinted, so the other cannot be in $S$, as it has nothing to footprint. The proof for all four $u_i$ being included in $S$ is the same, except now no $v_i$ can be in $S$. Thus, $\gamma_{gr}^L(K_4  \, \Box \, K_2) < 5$. In fact, $\gamma_{gr}^L(K_4  \, \Box \, K_2) = 4$ since all $u_i$ can be in $S$. To see the L-Grundy domination number increase by two after removing an edge, let $S = \{u_1, u_2, u_3, u_4\}$. Suppose edge $u_1v_1$ is removed. Then $u_1$ no longer footprints $v_1$, so by appending $v_1$ to the end of $S$, followed by $v_i$ for $i > 1$, we have created a longer L-sequence by two.

Consider the L-Grundy number of the complete graph, $\gamma_{gr}^L(K_n) = 2$. Let $u$ and $v$ be the two vertices in $\widehat{S}$, according to Theorem \ref{thm:completegraph}. Removal of $e = uv$ from $K_n$ now allows us to add any other vertex to the end of $S$. If $x$ is such a vertex, then our new L-Grundy sequence is $(u,v,x)$ as $u$ footprints every vertex except $v$, $v$ footprints itself, and $x$ footprints both $u$ and $v$.

Next, consider a cycle with a leaf attached,  as in Theorem \ref{thm:cyclewithleaf}. If $u$ is the leaf, then removal of any edge not incident with $u$ results in a tree, whose L-Grundy number is equal to the number of vertices. Since the cycle with leaf has L-Grundy number equal to the number of vertices as well, it is clear that the L-Grundy number remains the same.

Finally, to see a drop of one, consider the graph $G$ obtained by taking two copies of a cycle $C_n$, $n \geq 3$, and adding a single edge between one vertex in each cycle. Clearly, $\gamma_{gr}^L(G) = 2n-1$. By then removing the single edge between the cycles, our disconnected graph now has L-Grundy number $2n-2$, as each cycle on $n$ vertices has L-Grundy number $n-1$.

\subsection{Vertex removal}
\hfill\\
 Let $H$ be an induced subgraph of $G$. If $S$ is an L-sequence for $H$, then it is also a L-sequence for $G$. Therefore $\gamma_{gr}^L(H) \leq \gamma_{gr}^L(G)$, and the L-Grundy dominating number is bounded above due to hereditary graph properties. In \cite{brevsar2016graphchanges}, it was shown that for the Grundy dominating number, $\gamma_{gr}(G) - 2 \leq \gamma_{gr}(G-u) \leq \gamma_{gr}(G)$. Similarly, it was shown in \cite{brevsar2018totaldomgraphoperations} that for the t-Grundy regime, vertex deletion affects the bounds by an identical amount. That is $\gamma_{gr}^t(G) - 2 \leq \gamma_{gr}^t(G-u) \leq \gamma_{gr}^t(G)$. Since the L-Grundy number differed from the Grundy number under the graph operation of edge deletion, it is natural to assume there may be some difference. In the following theorem, we show that despite this ability of vertices to footprint themselves, the bounds remain the same for the operation of vertex deletion.

\begin{thm}\label{thm:vertex_removal}
If $G$ is a graph and $u \in V(G)$, then $\gamma_{gr}^L(G) - 2 \leq \gamma_{gr}^L(G - u) \leq \gamma_{gr}^L(G).$
\end{thm}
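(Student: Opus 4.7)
The upper bound is essentially given to us: since $G - u$ is an induced subgraph of $G$, and as the paragraph preceding the theorem observes, any L-sequence for an induced subgraph is an L-sequence in the ambient graph. So one sentence handles $\gamma_{gr}^L(G-u) \leq \gamma_{gr}^L(G)$. The work is in the lower bound.

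The plan for the lower bound is to take a maximum L-sequence $S = (v_1,\ldots,v_k)$ of $G$, and exhibit an L-sequence $S'$ of $G - u$ of length at least $k - 2$. For each $v_i \in S$, let $F_i = N_G[v_i]\setminus \bigcup_{j<i}N_G(v_j)$ denote its footprint in $G$. The effect of deleting $u$ from both the graph and the sequence is clean to track: for any subsequence of $S$ that does not contain $u$, the footprint of $v_l$ in $G-u$ is exactly $N_G[v_l]\setminus\bigl(\bigcup_{j<l,\,v_j \text{ kept}} N_G(v_j)\cup\{u\}\bigr)$, and deleting vertices from the prefix only shrinks the forbidden union. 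So the only way removing a vertex $v_l$ from $S$ can fail is if $v_l$'s footprint in $G-u$ (with the surviving prefix) is empty; and this can be checked against $F_l$.

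The main structural observation, which I would prove as a small claim, is that \emph{at most one} index $l$ can have $F_l = \{u\}$. Indeed, if $i < j$ both satisfied $F_i = F_j = \{u\}$, then $u \in N[v_i]$ forces $u \in N(v_i)$ (since $u \neq v_i$ whenever $v_i$ is not $u$ itself), which would place $u \in \bigcup_{l<j}N(v_l)$ and contradict $F_j = \{u\}$. The only subtle point is when $u \in \widehat S$, say $u = v_m$, in which case $v_m$ can footprint itself; but one checks that if some later $v_j$ also had $F_j = \{u\}$, the same adjacency argument through $v_i$ with $i < j, i\neq m$ still kills the possibility of two such bad indices.

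With this claim in hand, I split into the two natural cases. If $u \notin \widehat S$, I keep $S$ but drop the (at most one) $v_l$ with $F_l = \{u\}$; the shrinking-prefix principle above ensures every remaining $v_l$ still footprints something in $G-u$, giving $|S'| \geq k-1$. If $u = v_m \in \widehat S$, I must drop $v_m$, and I may additionally need to drop one more vertex whose footprint becomes empty after $u$'s deletion (either some $v_i$ with $i < m$ and $F_i = \{u\}$, or some $v_j$ with $j > m$ and $F_j = \{u\}$, but by the claim not both). That yields $|S'| \geq k-2$. The main obstacle I expect is precisely the bookkeeping of case $u \in \widehat S$: I have to verify that after simultaneously removing $v_m$ from the sequence (which only enlarges footprints) and removing $u$ from the graph (which only shrinks them), the net effect on each $F_l'$ for $l \neq m$ is to lose at most the element $u$ itself, so that $F_l \neq \{u\}$ is enough to guarantee $F_l' \neq \emptyset$. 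Once that monotonicity is pinned down, the two cases combine to give $\gamma_{gr}^L(G-u) \geq k - 2 = \gamma_{gr}^L(G) - 2$.
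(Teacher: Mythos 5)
Your proof is correct and rests on the same key fact as the paper's: at most one vertex other than $u$ itself can have $u$ as its sole footprint (once some $v_i \neq u$ footprints only $u$, we have $u \in N(v_i)$, so no later vertex can footprint $u$), hence deleting $u$ costs the sequence at most two members. If anything, your write-up is more complete than the paper's, which only examines the single case where $u,v \in \widehat{S}$ footprint each other and does not explicitly verify that the surviving vertices still footprint something in $G-u$; the monotonicity observation you isolate --- dropping entries from the sequence only enlarges the remaining footprints, while deleting $u$ from the graph only removes the element $u$ from them, so $F_l \neq \{u\}$ guarantees a nonempty footprint in $G-u$ --- is exactly what is needed to close that bookkeeping gap.
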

\begin{proof}
Let $S$ be an L-Grundy dominating sequence of $G$, and let $u \in V(G)$ be the deleted vertex such that $G' = G - u$. Since $G'$ is an induced subgraph of $G$, we must have $\gamma_{gr}^L(G') \leq \gamma_{gr}^L(G)$. For the lower bound, suppose that $u, v \in \widehat{S}$ and that both $v$ footprints $u$ and $u$ footprints $v$. When removing a vertex $u$ from the L-sequence, we must also consider the effect the removal of $u$ has on the vertex that footprints $u$. Since a vertex may be footprinted by at most two vertices, if $v$ does not footprint itself or any vertices other than $u$, a new L-sequence $S'$ is obtained from $S$ by removing $u$ and $v$. Thus, $|S'| = |S \setminus \{u,v\}| = |S|-2$.
\end{proof} 

We now provide a few examples of graphs satisfying the range of values in the above proof, showing the bounds in the theorem are tight.

Consider the L-Grundy number of a complete graph, $K_n$, $n \geq 3$. Clearly, $\gamma_{gr}^L(K_n) = 2$, and removing any vertex, we have a $K_{n-1}$, whose L-Grundy number remains $2$. To achieve $\gamma_{gr}^L(G-u) = \gamma_{gr}^L(G) - 1$, let $G$ be a path $P_n$, $n \geq 2$. Let $u$ be an end vertex of the path and observe that $\gamma_{gr}^L(P_n) = n$. Then, $\gamma_{gr}^L(P_n - u) = n-1$. Finally, to see an L-Grundy number drop of $2$, consider the graph $G$ which is $C_n$ with a leaf attached, as in Theorem \ref{thm:cyclewithleaf}. Then $\gamma_{gr}^L(G) = n+1$. By removing the leaf from $G$, we now have a cycle with $n$ vertices whose L-Grundy number is $n-1$.

\section{Conclusion}\label{sec:conclusion}
We believe the problem of characterizing $n$-vertex graphs $G$ satisfying $\gamma_{gr}^L(G) = n$ to be difficult. In this paper, we have determined sufficient conditions for leaf placement in $G$, but determining the necessary conditions proves to be more challenging.  In \cite{brevsar2020grundy}, it was shown that every $n$-vertex tree $T$ satisfies $\gamma_{gr}^L(T) = n$ and that if $G$ is an  $n$-vertex graph such that $\gamma_{gr}^L(G) = n$, then $\delta = 1$. We have proved that if $G$ is a graph such that $\gamma_{gr}^L(G) = k < n$, then by obtaining $G'$ from $G$ by attaching a leaf to each vertex of degree $3$ or higher, we have $\gamma_{gr}^L(G') = n'$, where $n'$ is the number of vertices in $G'$. Furthermore, we were also able to construct an infinite family of graphs with only one degree one vertex such that $\gamma_{gr}^L(G) = |V(G)|$. In light of this, we restate the following problem, originally proposed in \cite{brevsar2017grundy}.

\begin{problem}[\cite{brevsar2017grundy}]
Characterize graphs $G$ with $\gamma_{gr}^L(G) = \mid V(G) \mid$.
\end{problem}

In Section \ref{sec:graphoperations}, we showed that the removal of any one edge resulting in the L-Grundy number increasing by $2$ or decreasing by $1$. Clearly, if $\gamma_{gr}^L(G) \geq 3$, the number of edges needed to make $G$ a clique cannot be added since a clique has L-Grundy number $2$, but it would be interesting to determine which sorts of structures permit stability in the L-Grundy number provided edges are added or removed.

In \cite{brevsar2016graphchanges}, it was shown that if $G$ is a graph and $e \in E(G)$, then $\gamma_{gr}(G) - 1 \leq \gamma_{gr}(G-e) \leq \gamma_{gr}(G)+1$. Moreover, there exists a specific graph $G$ that realizes every value between $\gamma_{gr}(G) - 1$ and $\gamma_{gr}(G)+1$ for different edges in $G$. Similarly, if $u \in V(G)$, there exists a specific graph $G$ that realizes every value between $\gamma_{gr}(G)-2$ and $\gamma_{gr}(G)$. We were unable to find such a graph that realizes every value in the bounds of Theorem \ref{thm:remove_edge} or Theorem \ref{thm:vertex_removal}. This motivates the following two problems.

\begin{problem}
Find a graph $G$ such that all the values of $\gamma_{gr}^L(G-e_j)$ are realized between $\gamma_{gr}^L(G)-1$ and $\gamma_{gr}^L(G)+2$ for some edges $e_j \in E(G)$, or prove that one does not exist.
\end{problem}

\begin{problem}
Find a graph $G$ such that all the values of $\gamma_{gr}^L(G-u_i)$ are realized between $\gamma_{gr}^L(G)-2$ and $\gamma_{gr}^L(G)$ for some vertices $u_i \in E(G)$,  or  prove that one does not exist.
\end{problem}

\section*{Acknowledgements}

Both authors wish to thank to Tomas Ju{\v{s}}kevi{\v{c}}ius and Peter van Hintum for their helpful comments and suggestions that have improved the quality of this paper.

\bibliographystyle{abbrv}
\bibliography{main}

\begin{thebibliography}{10}

\bibitem{barioli2010zero}
F.~Barioli, W.~Barrett, S.~M. Fallat, H.~T. Hall, L.~Hogben, B.~Shader, P.~Van
  Den~Driessche, and H.~Van Der~Holst.
\newblock Zero forcing parameters and minimum rank problems.
\newblock {\em Linear Algebra and its Applications}, 433(2):401--411, 2010.

\bibitem{bell2021grundy}
K.~Bell, K.~Driscoll, E.~Krop, and K.~Wolff.
\newblock Grundy domination of forests and the strong product conjecture.
\newblock {\em arXiv preprint arXiv:2104.05665}, 2021.

\bibitem{benson2018zero}
K.~F. Benson, D.~Ferrero, M.~Flagg, V.~Furst, L.~Hogben, V.~Vasilevska, and
  B.~Wissman.
\newblock Zero forcing and power domination for graph products.
\newblock {\em Australasian Journal of Combinatorics}, 70(2):221, 2018.

\bibitem{bozeman2019restricted}
C.~Bozeman, B.~Brimkov, C.~Erickson, D.~Ferrero, M.~Flagg, and L.~Hogben.
\newblock Restricted power domination and zero forcing problems.
\newblock {\em Journal of Combinatorial Optimization}, 37(3):935--956, 2019.

\bibitem{brevsar2017grundy}
B.~Bre{\v{s}}ar, C.~Bujt{\'a}s, T.~Gologranc, S.~Klav{\v{z}}ar,
  G.~Ko{\v{s}}mrlj, B.~Patk{\'o}s, Z.~Tuza, and M.~Vizer.
\newblock Grundy dominating sequences and zero forcing sets.
\newblock {\em Discrete Optimization}, 26:66--77, 2017.

\bibitem{brevsar2021graphs}
B.~Bre{\v{s}}ar and T.~Dravec.
\newblock Graphs with unique zero forcing sets and grundy dominating sets.
\newblock {\em arXiv preprint arXiv:2103.10172}, 2021.

\bibitem{brevsar2020grundy}
B.~Bre{\v{s}}ar, T.~Gologranc, M.~A. Henning, and T.~Kos.
\newblock On the l-grundy domination number of a graph.
\newblock {\em Filomat}, 34(10):3205--3215, 2020.

\bibitem{brevsar2016graphchanges}
B.~Bre{\v{s}}ar, T.~Gologranc, and T.~Kos.
\newblock Dominating sequences under atomic changes with applications to
  sierpinski and interval graphs.
\newblock {\em Applicable Analysis and Discrete Mathematics}, 10(2):518--531,
  2016.

\bibitem{brevsar2014dominating}
B.~Bre{\v{s}}ar, T.~Gologranc, M.~Milani{\v{c}}, D.~F. Rall, and R.~Rizzi.
\newblock Dominating sequences in graphs.
\newblock {\em Discrete Mathematics}, 336:22--36, 2014.

\bibitem{brevsar2016total}
B.~Bre{\v{s}}ar, M.~A. Henning, and D.~F. Rall.
\newblock Total dominating sequences in graphs.
\newblock {\em Discrete Mathematics}, 339(6):1665--1676, 2016.

\bibitem{brevsar2018totaldomgraphoperations}
B.~Bre{\v{s}}ar, T.~Kos, G.~N. Nasini, and P.~Torres.
\newblock Total dominating sequences in trees, split graphs, and under modular
  decomposition.
\newblock {\em Discrete Optimization}, 28, 2018.

\bibitem{edholm2012vertex}
C.~J. Edholm, L.~Hogben, M.~Huynh, J.~LaGrange, and D.~D. Row.
\newblock Vertex and edge spread of zero forcing number, maximum nullity, and
  minimum rank of a graph.
\newblock {\em Linear Algebra and its Applications}, 436(12):4352--4372, 2012.

\bibitem{huang2010minimum}
L.-H. Huang, G.~J. Chang, and H.-G. Yeh.
\newblock On minimum rank and zero forcing sets of a graph.
\newblock {\em Linear Algebra and its Applications}, 432(11):2961--2973, 2010.

\bibitem{lin2019zero}
J.~C.-H. Lin.
\newblock Zero forcing number, grundy domination number, and their variants.
\newblock {\em Linear Algebra and its Applications}, 563:240--254, 2019.

\bibitem{lin2021zero}
J.~C.-H. Lin et~al.
\newblock A zero forcing technique for bounding sums of eigenvalue
  multiplicities.
\newblock {\em arXiv preprint arXiv:2104.06213}, 2021.

\bibitem{nasini2020grundy}
G.~Nasini and P.~Torres.
\newblock Grundy dominating sequences on x-join product.
\newblock {\em Discrete Applied Mathematics}, 284:138--149, 2020.

\bibitem{work2008zero}
A.~M. R.-S.~G. Work et~al.
\newblock Zero forcing sets and the minimum rank of graphs.
\newblock {\em Linear Algebra and its Applications}, 428(7):1628--1648, 2008.

\end{thebibliography}

\end{document}